\DeclarePairedDelimiter\ceil{\lceil}{\rceil}
\newcommand\restr[2]{{
  \left.\kern-\nulldelimiterspace 
  #1 
  \vphantom{\big|} 
  \right|_{#2} 
  }}
\def\BState{\State\hskip-\ALG@thistlm}
\numberwithin{equation}{section}
\tikzset{
	dot diameter/.store in=\dot@diameter,
	dot diameter=2pt,
	dot spacing/.store in=\dot@spacing,
	dot spacing=9pt,
	dots/.style={
		line width=\dot@diameter,
		line cap=round,
		dash pattern=on 0pt off \dot@spacing
	}
}
\newtheorem{theorem}{Theorem}[section]
\newtheorem{lem}[theorem]{Lemma}
\newtheorem{rem}[theorem]{Remark}
\title{The extremal Sombor index of trees with a given\\ dissociation number $\varphi$}
\author{
	Joyentanuj Das\thanks{Department of Mathematics, College of Engineering and Technology SRM Institute of Science and Technology, Kattankulathur, Chennai 603203, India.\\
	Emails: joyentanuj@gmail.com, joyentad@srmist.edu.in.}
}
\date{}
\begin{document}

\maketitle

\begin{abstract}
The Sombor index is a topological index in graph theory defined by Gutman in 2021. In this article, we find the maximum Sombor index of trees of order $\mathbf{n}$ with a given dissociation number $\varphi$, where $\ceil*{\frac{2\mathbf{n}}{3}} \leq \varphi(G) \leq \mathbf{n}-1$. We also provide the unique graph among the chosen class where the maximum Sombor index is attained.
\end{abstract}

\noindent {\sc\textsl{Keywords}:} Trees, Sombor index, Dissociation number.

\noindent {\sc\textbf{MSC}: 05C10, 05C35, 05C38}  

\section{Introduction}
Let $G=(V(G),E(G))$  be a finite, simple, connected graph with $V(G)$ as the set of vertices and $E(G)$ as the set of edges in $G$. We simply write $G=(V,E)$ if there is no scope of confusion. We write $u\sim v$ to indicate that the vertices $u,v \in V$ are adjacent in $G$. The degree
of the vertex $v$, denoted by $d_G(v)$ (or simply $d(v)$), equals the number of vertices in $V$ that are adjacent to $v$.  A graph $H$ is said to be a subgraph of $G$ if $V(H) \subset V(G)$ and $E(H) \subset E(G)$. For any subset $S \subset V (G)$, a subgraph $H$ of $G$ is said to be an induced subgraph with vertex set $S$, if $H$ is a maximal subgraph of $G$ with vertex set $V(H)=S$.

For any two vertices $u,v \in V(G)$, we will use $d(u,v)$ to denote the distance between the vertices $u$ and $v$, i.e. the length of the shortest path connecting $u$ and $v$. For a vertex $u \in V(G)$, we use $N(u)$ to denote the set of neighbors of $u$, \textit{i.e.} the set of vertices that are adjacent to $u$. A vertex $u \in V(G)$ is said to be a pendant vertex if $d(u) = 1$. Since a pendant vertex $u \in V(G)$ has $d(u) = 1$ it is adjacent to exactly one vertex in $G$, say $v$ and this vertex $v$ is said to be the support vertex of $u$.

A graph $G=(V,E)$ is said to be bipartite if the vertex set $V$ can be partitioned into two subsets $M$ and $N$ such that  $E\subset M\times N$. A bipartite graph with vertex partitions $M$ and $N$  is said to be a complete bipartite graph if every vertex of $M$ is adjacent to every vertex of $N$ and moreover, if $|M| = m$  and $|N| = n$, then the  complete bipartite graph is denoted by  $K_{m,n}$. Let $S_n = K_{1,n-1}$ be the star graph of order $n$. Set $V(S_n) = \{v_0,v_1,\cdots,v_{n-1}\}$ such that $d(v_0) = n-1$ and $d(v_i) = 1$ for all $1 \le i \le n-1$. The vertex $v_0$ is called the central vertex of $S_n$.

A vertex cover of a graph is a set of vertices that includes at least one endpoint of every edge of the graph. Given a graph \( G \) and a positive integer \( k \), a subset \( S \) of the vertices of \( G \) is a vertex \( k \)-path cover if, for every path on \( k \) vertices in \( G \), it contains a vertex from \( S \). The minimum cardinality of a vertex \( k \)-path cover is denoted by \( \psi_k(G) \). The graph invariant \( \psi_k(G) \) was introduced in \cite{Bresar,Bresar1} motivated by the problem of ensuring data integrity of communications in wireless sensor networks. One can see that \( \psi_2(G) \) is the same as the minimum cardinality of a vertex cover of \( G \); hence, the concept of vertex \( k \)-path cover is a generalization of vertex cover.

A set \( I \) of vertices in a graph \( G \) is an independent set if no pair of vertices of \( I \) are adjacent. The independence number of \( G \), denoted by \( \alpha(G) \), is the maximum cardinality of an independent set in \( G \). We have a nice relation between \( \psi_2(G) \) and \( \alpha(G) \) as follows:

\[
\psi_2(G) + \alpha(G) = |V(G)|.
\]

A set \( D \) of vertices in a graph \( G \) is a dissociation set if the induced subgraph with vertex set \( D \) has maximum degree at most 1. A maximum dissociation set of \( G \) is a dissociation set with maximum cardinality. The dissociation number of \( G \), denoted by \( \varphi(G) \), is the cardinality of a maximum dissociation set. Note that \( I \subset D \). Clearly one can also observe that \( \psi_3(G) \) and \( \varphi(G) \) are related as follows:

\[
\psi_3(G) + \varphi(G) = |V(G)|.
\]

In~\cite{Gut1}, Gutman defined a new topological index under the name Sombor index. For a graph G, its Sombor index is defined as $$SO(G) = \sum_{u \sim v} \sqrt{d(u)^2 + d(v)^2}.$$ Note that, in $SO(G)$, the summation is over all edges of $G$. This new index attracted many researchers within a short period of time and some of them were able to show applications of the Sombor index, for example see~\cite{Reti}. For articles related to Sombor index the readers can refer to \cite{Ali,Cruz,Das,Das1,Das2,Fili,Gut1,Gut2,Horo,Red,Reti,Sun,Wang,Zhou,Zhou1}.

In graph theory, studying extremal graphs and indices for a class of graphs with a given parameter is a very interesting problem. For example, one may refer to \cite{Boro,Das1,Das2,Sun,Tom,Vas,Zhou,Zhou1}. For a fixed independence number, extremal graphs have been studied for indices like the Harary index in \cite{Boro}, the first Zagreb index in \cite{Vas} and functions like the Extremal vertex-degree function in \cite{Tom}.

In \cite{Zhou}, the authors have studied the extremal Sombor index of trees and unicyclic graphs with a given matching number. In \cite{Zhou1}, the authors have studied the Sombor index of trees and unicyclic graphs with a given maximum degree. In~\cite{Sun}, the authors found the maximum and the minimum Sombor index of trees with fixed domination number. In~\cite{Das3}, the author found the maximum Sombor index of trees with a given independence number.

From the definition of the dissociation number, it is clear that the dissociation number is an immediate generalization of the independence number. Several interesting results have been published related to dissociation number (for example see~\cite{Das4, Hunag, Orlovich, Tu, Tu1}). In this article, we find the maximum Sombor index of trees of order $\mathbf{n}$ with a given dissociation number $\varphi$, where $\ceil*{\frac{2\mathbf{n}}{3}} \leq \alpha \leq \mathbf{n}-1$ and also provide the unique graph where the maximum Sombor index is attained.

This article is organized as follows: In Section~\ref{sec:notations}, we state the preliminary results, state some basic inequalities and a result related to trees and its maximal independence set that will be used in the later section. In Section~\ref{sec:main}, we build up the necessary tools required to prove the main result and lastly in Theorem~\ref{thm:main}, we state and prove the main result of the article.

\section{Notation and Preliminaries}\label{sec:notations}
The next two lemmas directly follow from verifying $f'(x) >0$ and $g'(x) < 0$ and these will be used again and again in the future proofs.
\begin{lem}\label{lem:f}
	For $x \ge 1$, if $f(x) = \sqrt{(x+c)^2+d^2}-\sqrt{x^2+d^2}$, where $c,d$ are positive integers, then $f(x)$ is a monotonically increasing function of $x$.
\end{lem}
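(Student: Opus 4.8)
The plan is to follow the hint in the text and verify directly that $f'(x) > 0$ for all $x \ge 1$. Differentiating term by term gives
$$f'(x) = \frac{x+c}{\sqrt{(x+c)^2+d^2}} - \frac{x}{\sqrt{x^2+d^2}},$$
so the lemma reduces to establishing the inequality
$$\frac{x+c}{\sqrt{(x+c)^2+d^2}} > \frac{x}{\sqrt{x^2+d^2}}.$$

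The key observation I would exploit is that both sides are values of a single auxiliary function $h(t) = t/\sqrt{t^2+d^2}$, evaluated at $t = x+c$ and at $t = x$ respectively. Since $c$ is a positive integer we have $x+c > x$, so it suffices to show that $h$ is strictly increasing on $(0,\infty)$. To that end I would compute
$$h'(t) = \frac{d^2}{(t^2+d^2)^{3/2}},$$
which is manifestly positive because $d$ is a positive integer; hence $h(x+c) > h(x)$ and therefore $f'(x) > 0$.

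Alternatively, one can bypass the auxiliary function and argue purely algebraically. Because $x \ge 1 > 0$ and $c > 0$, both sides of the displayed inequality are positive, so I would clear the (positive) denominators and square, reducing the claim to $(x+c)^2(x^2+d^2) > x^2\big((x+c)^2+d^2\big)$. Expanding and cancelling the common term $x^2(x+c)^2$ collapses this to $(x+c)^2 d^2 > x^2 d^2$, i.e.\ $(x+c)^2 > x^2$, which holds since $0 < x < x+c$. Either route shows $f$ is monotonically increasing on $x \ge 1$.

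I do not anticipate a genuine obstacle here, as the statement is an elementary one-variable calculus fact. The only points requiring mild care are ensuring that all quantities remain positive so that squaring preserves the inequality, and recognizing the shared structure $h(t) = t/\sqrt{t^2+d^2}$, which renders the monotonicity transparent and will presumably recur when the companion function $g$ is treated in the parallel lemma.
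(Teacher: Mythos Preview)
Your proposal is correct and follows exactly the route the paper indicates: the paper simply asserts that the lemma ``directly follows from verifying $f'(x) > 0$'' without supplying any further detail, and your computation of $f'(x)$ together with either the auxiliary-function argument or the algebraic squaring argument carries this out cleanly. There is nothing to add; your write-up is in fact more detailed than the paper's own treatment.
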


\begin{lem}\label{lem:g}
	For $x \ge 1$, if $g(x) = \sqrt{c^2+ x^2}-\sqrt{d^2+x^2}$, where $c,d$ are positive integers and $c>d$, then $g(x)$ is a monotonically decreasing function of $x$.
\end{lem}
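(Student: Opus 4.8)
The plan is to establish monotonicity directly from the sign of the derivative, exactly as the surrounding text anticipates. Since $g$ is a difference of two functions of the form $\sqrt{k^2+x^2}$ with $k$ a positive integer, it is differentiable on the whole interval $[1,\infty)$, and hence it suffices to show that $g'(x)<0$ for every $x\ge 1$ in order to conclude that $g$ is monotonically decreasing there.

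First I would differentiate term by term. Using $\frac{d}{dx}\sqrt{k^2+x^2}=\frac{x}{\sqrt{k^2+x^2}}$ and then collecting the common factor $x$, I obtain
$$g'(x) = \frac{x}{\sqrt{c^2+x^2}} - \frac{x}{\sqrt{d^2+x^2}} = x\left(\frac{1}{\sqrt{c^2+x^2}} - \frac{1}{\sqrt{d^2+x^2}}\right).$$
Next I would analyze the sign of each factor separately. For $x\ge 1$ the prefactor $x$ is strictly positive. For the bracketed difference, the hypothesis $c>d>0$ gives $c^2>d^2$, hence $c^2+x^2 > d^2+x^2$, and taking positive square roots yields $\sqrt{c^2+x^2} > \sqrt{d^2+x^2}$. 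Passing to reciprocals of these positive quantities reverses the inequality, so $\frac{1}{\sqrt{c^2+x^2}} < \frac{1}{\sqrt{d^2+x^2}}$ and the bracket is strictly negative. The product of a positive and a negative quantity is negative, whence $g'(x)<0$ for all $x\ge 1$, and the desired monotonicity follows.

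There is essentially no serious obstacle in this argument: the claim reduces to a single sign computation once the derivative is written in the factored form above, and the integrality of $c$ and $d$ is never used beyond ensuring positivity and the strict inequality $c>d$. The only point demanding a little care is the reversal of the inequality when inverting, which is justified precisely because both denominators are strictly positive. I note in passing that the companion Lemma~\ref{lem:f} yields to the same mechanism, with the sign of $f'$ instead coming from the monotonicity of $t\mapsto \frac{t}{\sqrt{t^2+d^2}}$ applied to $t=x+c>x$.
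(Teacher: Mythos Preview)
Your proposal is correct and matches the paper's approach exactly: the paper simply states that the lemma ``directly follows from verifying $f'(x)>0$ and $g'(x)<0$'' without further detail, and your computation of $g'(x)=x\bigl(\tfrac{1}{\sqrt{c^2+x^2}}-\tfrac{1}{\sqrt{d^2+x^2}}\bigr)<0$ is precisely that verification carried out in full.
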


A vertex is called a quasi-pendant vertex if it is adjacent to a pendant vertex. For a graph $G$, let $P(G)$ and $Q(G)$ denote the set of all pendant and quasi-pendant vertices, respectively. Moreover, let $Q_2(G)$ be the set of all quasi-pendant vertices of degree $2$ in $G$. In \cite{Hunag}, the authors showed that there exists a maximum dissociation set of $G$ such that it contains all vertices of $P(G) \cup Q_2(G)$. The result is stated as follows:

\begin{lem}\cite{Hunag}\label{lem:dis}
	Let $G$ be a graph with order $n \ge 5$. Then there exists a maximum dissociation set
	$D(G)$ such that $P(G) \cup Q_2(G) \subseteq D(G)$.
\end{lem}

\begin{lem}\label{lem:q2=0}
	Let $T$ be the tree of order $\mathbf{n}$ and dissociation number $\varphi$ with maximum Sombor index then, there is no quasi-pendant vertex of degree $2$, i.e. $Q_2(T) = \emptyset$.
\end{lem}

\begin{proof}
	Suppose $Q_2(T) \neq \emptyset$ and $v \in Q_2(T)$. Let $N(v) = \{u,w\}$, where $w$ is a pendant vertex and $T^*$ be the graph obtained from $T$ by deleting the edge $v \sim w$ and adding the edge $u \sim w$. Then,
	\begin{align*}
		SO(T^*) - SO(T) &> \sqrt{(d(u)+1)^2 + 1^2} - \sqrt{d(u)^2 + 2^2} + \sqrt{(d(u)+1)^2 + 1^2} - \sqrt{2^2 + 1^2} > 0,
	\end{align*}
	where, the last inequality follows from the fact that $d(u) \ge 2$. Note that, $\varphi(T^*) = \varphi(T)$, but $SO(T^*) > SO(T)$, which is a contradiction to the maximality of $T$. Hence $Q_2(T) = \emptyset$.
\end{proof}

\section{Trees with a given dissociation number}\label{sec:main}
Let $T(\mathbf{n},\varphi)$ be the collection of trees of order $\mathbf{n}$ and dissociation number $\varphi$. Observe that, if $\varphi = \mathbf{n}-1$ then $T(\mathbf{n},\mathbf{n}-1) = \{S_\mathbf{n}\}$ and $SO(S_\mathbf{n}) = (\mathbf{n}-1)\sqrt{(\mathbf{n}-1)^2 + 1} = \varphi \sqrt{\varphi^2 + 1}$, hence we consider the case when $\varphi \le \mathbf{n}-2$.

Let \(T_1(\mathbf{n},\varphi)\) be a set of trees on $\mathbf{n}$ vertices obtained from $S_{\mathbf{n}-\varphi}$ by attaching at least two pendant edges to each vertex of $S_{\mathbf{n}-\varphi}$ so that the total number of pendant vertices of the trees in $S_{\mathbf{n}-\varphi}$ is \( \varphi \). Let $T_2(\mathbf{n},\varphi)$ be a set of trees on $\mathbf{n}$ vertices obtained from $S_{\mathbf{n}-(\varphi-1)}$ by attaching at least two pendant edges to each vertices in \(\{v_1, \ldots, v_{\mathbf{n}-\varphi}\}\) and attaching one pendant edge to \( v_0 \) so that the total number of leaves of the trees in $T_2(\mathbf{n},\alpha)$ is \( \varphi - 1 \), where \( \varphi \notin \left\{ \frac{2\mathbf{n} }{3}, \frac{2\mathbf{n} + 1}{3} \right\} \). Let $T_3(\mathbf{n},\varphi)$ be a set of trees on \( n \) vertices obtained from $S_{\mathbf{n}-(\varphi-1)}$ by attaching at least two pendant edges to each vertex in \(\{v_1, \ldots, v_{\mathbf{n}-\varphi}\}\) so that the total number of leaves of the trees in $T_3(\mathbf{n},\varphi)$ is \( \varphi - 1 \), where \( \varphi \neq \frac{2\mathbf{n}}{3}\). Note that $T_1(\mathbf{n},\varphi) \cup T_2(\mathbf{n},\varphi) \cup T_3(\mathbf{n},\varphi) \subset T(\mathbf{n},\alpha)$. For reference, one can see Figure~\ref{fig:tree-1}.

\begin{figure}[ht]
	\centering
	\begin{subfigure}[t]{0.5\textwidth}
		\centering
		\begin{tikzpicture}[scale = 1.4]
			\draw[fill=red] (0,0) circle (2pt);
			\draw[fill=black] (1,1) circle (2pt);
			\draw[fill=black] (1,-1) circle (2pt);
			
			\draw[fill=black] (2,0.5) circle (2pt);
			\draw[fill=black] (2,1.5) circle (2pt);
			\draw[fill=black] (2,-0.5) circle (2pt);
			\draw[fill=black] (2,-1.5) circle (2pt);
			
			\draw[fill=black] (-1,0.5) circle (2pt);
			\draw[fill=black] (-1,-0.5) circle (2pt);
			
			\draw[thick] (0,0) -- (1,1);
			\draw[thick] (2,0.5) -- (1,1);
			\draw[thick] (2,1.5) -- (1,1);
			\draw[thick] (0,0) -- (1,-1) -- (2,-1.5);
			\draw[thick] (2,-0.5) -- (1,-1);
			
			\draw[thick] (0,0) -- (-1,0.5);
			\draw[thick] (0,0) -- (-1,-0.5);
			
			\draw[dotted] (1,0.7) -- (1,-0.7);
			\draw[dotted] (-1,0.4) -- (-1,-0.4);
			\draw[dotted] (2,0.5) -- (2,1.5);
			\draw[dotted] (2,-0.5) -- (2,-1.5);
			
			\node at (0,-0.3) {$v_0$};
			\node at (1,1+0.2) {$v_1$};
			\node at (1,-1-0.3) {$v_{\mathbf{n}-(\varphi+1)}$};		
		\end{tikzpicture}
		\caption{\(T \in T_1(\mathbf{n},\varphi)\)}
	\end{subfigure}%
	~ 
	\begin{subfigure}[t]{0.5\textwidth}
		\centering
		\begin{tikzpicture}[scale = 1.4]
			\draw[fill=red] (0,0) circle (2pt);
			\draw[fill=black] (1,1) circle (2pt);
			\draw[fill=black] (1,-1) circle (2pt);
			
			\draw[fill=black] (2,0.5) circle (2pt);
			\draw[fill=black] (2,1.5) circle (2pt);
			\draw[fill=black] (2,-0.5) circle (2pt);
			\draw[fill=black] (2,-1.5) circle (2pt);
			
			\draw[fill=black] (-1,0) circle (2pt);
			
			\draw[thick] (0,0) -- (1,1);
			\draw[thick] (2,0.5) -- (1,1);
			\draw[thick] (2,1.5) -- (1,1);
			\draw[thick] (0,0) -- (1,-1) -- (2,-1.5);
			\draw[thick] (2,-0.5) -- (1,-1);
			
			\draw[thick] (0,0) -- (-1,0);
			
			\draw[dotted] (1,0.7) -- (1,-0.7);
			\draw[dotted] (2,0.5) -- (2,1.5);
			\draw[dotted] (2,-0.5) -- (2,-1.5);
				
			\node at (0,-0.3) {$v_0$};
			\node at (1,1+0.2) {$v_1$};
			\node at (1,-1-0.3) {$v_{\mathbf{n}-\varphi}$};	
		\end{tikzpicture}
		\caption{\(T \in T_2(\mathbf{n},\varphi)\)}
	\end{subfigure}
		~ 
	\begin{subfigure}[t]{0.5\textwidth}
		\centering
		\begin{tikzpicture}[scale = 1.4]
			\draw[fill=red] (0,0) circle (2pt);
			\draw[fill=black] (1,1) circle (2pt);
			\draw[fill=black] (1,-1) circle (2pt);
			
			\draw[fill=black] (2,0.5) circle (2pt);
			\draw[fill=black] (2,1.5) circle (2pt);
			\draw[fill=black] (2,-0.5) circle (2pt);
			\draw[fill=black] (2,-1.5) circle (2pt);
			
			\draw[thick] (0,0) -- (1,1);
			\draw[thick] (2,0.5) -- (1,1);
			\draw[thick] (2,1.5) -- (1,1);
			\draw[thick] (0,0) -- (1,-1) -- (2,-1.5);
			\draw[thick] (2,-0.5) -- (1,-1);

			\draw[dotted] (1,0.7) -- (1,-0.7);
			\draw[dotted] (2,0.5) -- (2,1.5);
			\draw[dotted] (2,-0.5) -- (2,-1.5);
			
			\node at (0,-0.3) {$v_0$};
			\node at (1,1+0.2) {$v_1$};
			\node at (1,-1-0.3) {$v_{\mathbf{n}-\varphi}$};		
		\end{tikzpicture}
		\caption{\(T \in T_3(\mathbf{n},\varphi)\)}
	\end{subfigure}
	\caption{Trees in \(T_1(\mathbf{n},\varphi)\), \(T_2(\mathbf{n},\varphi)\) and \(T_3(\mathbf{n},\varphi)\) with some labeled vertices.}  \label{fig:tree-1}
\end{figure}

Let $T^*(\mathbf{n},\varphi)$ be the tree of order $\mathbf{n}$ obtained from the star $S_{\mathbf{n}-\varphi}$ by attaching exactly two pendant edge to each of the vertices $\{v_1,v_2,\cdots,v_{\mathbf{n}-(\varphi+1)}\} \subset V(S_{\mathbf{n}-\alpha})$ and attaching $3\varphi-2(\mathbf{n}-1)$ pendant vertices to the central vertex $v_0$ of $S_{\mathbf{n}-\alpha}$. Observe that, $T^*(\mathbf{n},\alpha) \in T_1(\mathbf{n},\alpha) \subset T(\mathbf{n},\alpha)$. For reference, one can see Figure~\ref{fig:tree}.

\begin{figure}[ht]
	\centering
	\begin{tikzpicture}[scale = 1.4,trim left=-3cm]
		\draw[fill=red] (0,0) circle (2pt);
		\draw[fill=black] (1,1) circle (2pt);
		\draw[fill=black] (1,-1) circle (2pt);
		
		\draw[fill=black] (2,0.5) circle (2pt);
		\draw[fill=black] (2,1.5) circle (2pt);
		\draw[fill=black] (2,-0.5) circle (2pt);
		\draw[fill=black] (2,-1.5) circle (2pt);
		
		\draw[fill=black] (-1,0.5) circle (2pt);
		\draw[fill=black] (-1,-0.5) circle (2pt);
		
		\draw[thick] (0,0) -- (1,1);
		\draw[thick] (2,0.5) -- (1,1);
		\draw[thick] (2,1.5) -- (1,1);
		\draw[thick] (0,0) -- (1,-1) -- (2,-1.5);
		\draw[thick] (2,-0.5) -- (1,-1);
		
		\draw[thick] (0,0) -- (-1,0.5);
		\draw[thick] (0,0) -- (-1,-0.5);
		
		\draw[dotted] (1,0.7) -- (1,-0.7);
		\draw[dotted] (-1,0.4) -- (-1,-0.4);

		\node at (0,-0.3) {$v_0$};
		\node at (1,1+0.2) {$v_1$};
		\node at (1,-1-0.3) {$v_{\mathbf{n}-(\varphi+1)}$};
		
		\node at (-2.7,0) {$3\varphi-2(\mathbf{n}-1)$};
		
		\draw [decorate, 
		decoration = {brace, raise=5pt,	amplitude=8pt}] (-1.2,-0.6) --  (-1.2,0.6);

	\end{tikzpicture}
\caption{$T^*(\mathbf{n},\varphi)$} \label{fig:tree}
\end{figure}

\begin{rem}
	Let $T^*(\mathbf{n},\alpha) \in T_1(\mathbf{n},\alpha) \subset T(\mathbf{n},\alpha)$ be defined as above, then
	\begin{align*}
		SO(T^*(\mathbf{n},\alpha)) &= (3\varphi-2(\mathbf{n}-1)) \sqrt{(2\varphi-\mathbf{n}+1)^2 + 1}\\
		&\ \ + (\mathbf{n}-(\varphi+1)) [\sqrt{(2\varphi-\mathbf{n}+1)^2 + 2^2} + 2\sqrt{2^2 + 1}],
	\end{align*}
	and since the vertex $v_0$ has atleast two pendant vertices, we have $3\varphi-2(\mathbf{n}-1) \ge 2$, \textit{i.e.} $\varphi \ge \ceil*{\frac{2\mathbf{n}}{3}}$.
\end{rem}

\begin{lem}\label{lem:1}
	Let $T \in T(\mathbf{n},\varphi)$ be a tree with maximal Sombor index, then $T \in T_1(\mathbf{n},\varphi) \cup T_2(\mathbf{n},\varphi) \cup T_3(\mathbf{n},\varphi)$.
\end{lem}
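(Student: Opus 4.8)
The plan is to reduce the whole statement to one structural fact, namely that an extremal tree has radius at most $2$, and then to read off the two admissible shapes. The guiding observation is that a tree in $T(\mathbf{n},\alpha)$ lies in $T_1(\mathbf{n},\alpha)\cup T_2(\mathbf{n},\alpha)$ exactly when it has a vertex from which every other vertex is within distance $2$ (equivalently, when deleting all pendant vertices leaves a star): deleting the pendants of a tree in $T_1$ yields $S_{\mathbf{n}-\alpha}$, and in $T_2$ it yields $S_{\mathbf{n}-\alpha+1}$. Granting this, the lemma is equivalent to showing that maximality of the Sombor index forces radius at most $2$.

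\emph{Setup.} Let $T\in T(\mathbf{n},\alpha)$ have maximum Sombor index. By Lemma~\ref{lem:ind} fix a maximum independent set $I$ with $P(T)\subseteq I$, so $|I|=\alpha$ and $C:=V(T)\setminus I$ has $|C|=\mathbf{n}-\alpha$. Since $I$ is independent, $C$ is a minimum vertex cover, every pendant vertex lies in $I$, and, trees being bipartite, K\"onig's theorem supplies a matching saturating $C$ that pairs each cover vertex with a private neighbour in $I$. I will use this matching as the bookkeeping device that tracks $\alpha$ under local moves.

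\emph{Crux: radius at most $2$.} Suppose for contradiction that $T$ has radius at least $3$. Fix a vertex $w$ of maximum degree, which we may take in $C$, and take a longest path in $T$; let $s$ be the support of one of its endpoints. By maximality of the path every neighbour of $s$ except its inward neighbour is a pendant, so $s$ is a ``broom'' with $s\in C$, and radius at least $3$ lets us take $s$ at distance at least $2$ from $w$, so that $d(w)\ge d(s)$. I would relocate this broom toward $w$: if $s$ carries at least two pendants, detach all but one of them and reattach them to $w$; if $s$ carries a single pendant, detach $s$ from its inward neighbour and attach it to $w$. In each variant one pendant is deliberately left so that $s$ stays in $C$, the multiset of pendant vertices is preserved, and after re-matching the affected vertices to their remaining private pendants the saturating matching of $C$ survives; hence $C$ remains a minimum vertex cover of size $\mathbf{n}-\alpha$ and $\alpha(T)$ is unchanged. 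Because degree is transferred onto a maximum-degree vertex (with $d(w)\ge d(s)$), Lemma~\ref{lem:f} forces every affected edge to increase weakly and at least one strictly, while Lemma~\ref{lem:g} controls the comparison of competing neighbour degrees; thus $SO(T)$ strictly increases, contradicting maximality. The genuine difficulty, and the main obstacle, is doing this while respecting $\alpha$ exactly: the naive move that empties $s$ manufactures a new leaf and raises $\alpha$, and transferring degree in the wrong direction lowers $SO$, so the retained pendant, the K\"onig re-matching, and the degree comparison are all essential; a careful case analysis (broom size, and whether the dense vertex lies in $C$ or $I$) is needed to make both constraints hold simultaneously.

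\emph{Finish.} Once $T$ has radius at most $2$, fix a centre $z$ and stratify $V(T)$ by distance $0,1,2$ from $z$; level-$2$ vertices are pendants, and a level-$1$ vertex is either a pendant of $z$ or itself supports pendants. If $z\in C$, then $I=P(T)$, every level-$1$ cover vertex supports a pendant (otherwise it would be a pendant, hence in $I$), and $z$ too supports a pendant, for otherwise $\{z\}\cup P(T)$ would be independent of size $\alpha+1$; hence the cover vertices form a star centred at $z$ with a pendant on each, that is, $T\in T_1(\mathbf{n},\alpha)$. If instead $z\in I$, then all neighbours of $z$ lie in $C$ and each supports a pendant (else it is a pendant adjacent to $z\in I$, impossible), while $I=P(T)\cup\{z\}$ has size $\alpha$; this is exactly $T\in T_2(\mathbf{n},\alpha)$, and the pendant count $\alpha-1\ge \mathbf{n}-\alpha$ forces $\alpha\neq \mathbf{n}/2$, consistent with $T_2$ being defined only there. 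In every case $T\in T_1(\mathbf{n},\alpha)\cup T_2(\mathbf{n},\alpha)$, as claimed.
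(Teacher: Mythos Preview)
Your reformulation (extremal $T$ has radius at most $2$) and your ``Finish'' classification are correct and pleasant. The gap is in the ``Crux''. Two concrete problems.

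First, the Sombor comparison is asserted, not proved, and the assertion is wrong as stated: when you move $k-1$ pendants from $s$ to $w$, the kept pendant edge at $s$ drops from $\sqrt{(k+1)^2+1}$ to $\sqrt{5}$, and the edge $s\sim p$ (to the inward neighbour) drops from $\sqrt{(k+1)^2+d(p)^2}$ to $\sqrt{4+d(p)^2}$. So it is \emph{not} true that ``every affected edge increases weakly''; you must pair these losses against specific gains at $w$ via Lemmas~\ref{lem:f} and~\ref{lem:g}, and that pairing is precisely the work you have not done. (For instance, to cancel the loss on $s\sim p$ you need an edge at $w$ whose neighbour degree you can compare to $d(p)$; nothing in your setup guarantees such an edge exists.) The paper's proof faces exactly this difficulty and resolves it by working with \emph{two} peripheral brooms $u,v$ at maximal distance in the pendant-deleted tree, then doing an explicit three-case computation (according to whether $u,v$ carry extra pendants) in which every decreasing term is matched term-by-term against an increasing one.

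Second, the claim that the maximum-degree vertex $w$ ``may be taken in $C$'' is unjustified. If the unique maximum-degree vertex lies in $I$, attaching pendants (which lie in $I$) to it creates edges inside $I$, so your K\"onig bookkeeping collapses and $\alpha$ need not be preserved. You flag this as a case ``needed'', but give no indication of how to handle it; the paper sidesteps the issue entirely by never privileging a global maximum-degree vertex, instead transferring degree between the two chosen brooms (both automatically in $C$). Until both points are repaired, what you have is a strategy rather than a proof.
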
 

\begin{proof}
Suppose $T \in T(\mathbf{n},\varphi) \setminus (T_1(\mathbf{n},\varphi) \cup T_2(\mathbf{n},\varphi)  \cup T_3(\mathbf{n},\varphi))$ and $T'$ be the tree obtained from $T$ by deleting the pendant along with the edges. Then, the number of support vertices in $T'$ is at least $2$, and we can choose two support vertices from $T'$, $u$ and $v$ such that $d(u,v)$ is the maximum in $T'$. Since $T'$ is a tree, there is a unique path between $u$ and $v$ and let $u \sim x$ and $y \sim v$ on this path.

Let $N_T(u) = \{u_1,\cdots,u_m\} \cup \{u_{m+1},\cdots,u_{m+a}\} \cup \{x\}$, where $m \ge 1$, $d(u_i) \ge 2$ for all $1 \le i \le m$ and $d(u_{m+i}) = 1$ for all $1 \le i \le a$. Let $N_T(v) = \{v_1,\cdots,v_n\} \cup \{v_{n+1},\cdots,v_{n+b}\} \cup \{y\}$,where $n \ge 1$, $d(v_j) \ge 2$ for all $1 \le j \le n$ and $d(v_{n+j}) = 1$ for all $1 \le j \le b$. If we delete the edges $u \sim x$ and $v \sim y$, then $T$ becomes disconnected into multiple components. Let $H$ be the component of the tree $T$ containing $x$ and $y$. Note that $d(u) = m+a+1$ and $d(v) = n+b+1$.	We prove the result by dividing it into the following cases:
\\

\underline{\textbf{Case 1:}} $a>1$ and $b>1$.
Using Lemma~\ref{lem:dis} we can choose a maximal dissociation set $D(T)$ such that it contains all the pendant and degree $2$ quasi-pendant vertices. Since $a,b > 1$, $\{u,v\} \cup \{u_1,\cdots,u_m\} \cup \{v_1,\cdots,v_n\}  \notin D(T)$. Thus we have $$\varphi(T) = \sum_{i=1}^{m}(d(u_i)-1) + \sum_{i=1}^{n}(d(v_i)-1)+a+b+\alpha(H).$$ Without loss of generality, we can assume that $d(u) \ge d(v)$.

\underline{\textbf{Subcase 1.1:}} $d(y) > d(x)$. Let $T^*$ be the graph obtained from $T$ by applying the following transformations:
\begin{itemize}
	\item Delete the edges $v \sim v_i$ for all $1 \le i \le n+(b-2)$.
	\item Add the edges $u \sim v_i$ for all $1 \le i \le n+(b-2)$.
\end{itemize}
Now, observe that after the transformation we have $\varphi(T) = \varphi(T^*)$ and 
\begin{align*}
	SO(T^*) - SO(T) &= \sqrt{d_{T^*}(u)^2 + d_{T^*}(x)^2} - \sqrt{d_{T}(u)^2 + d_{T}(x)^2} \\
	&\ \ +\sqrt{d_{T^*}(v)^2 + d_{T^*}(y)^2} - \sqrt{d_{T}(v)^2 + d_{T}(y)^2}\\
	&\ \ +\sum_{i=1}^{m+a} \sqrt{d_{T^*}(u)^2 + d_{T^*}(u_i)^2} - \sqrt{d_{T}(u)^2 + d_{T}(u_i)^2} \\
	&\ \ +\sum_{j=1}^{n+b-2} \sqrt{d_{T^*}(u)^2 + d_{T^*}(v_j)^2} - \sqrt{d_{T}(v)^2 + d_{T}(v_j)^2}\\
	&\ \ +\sqrt{d_{T^*}(v)^2 + d_{T^*}(v_{n+b-1})^2} - \sqrt{d_{T}(v)^2 + d_{T}(v_{n+b-1})^2}\\
	&\ \ +\sqrt{d_{T^*}(v)^2 + d_{T^*}(v_{n+b})^2} - \sqrt{d_{T}(v)^2 + d_{T}(v_{n+b})^2}.
\end{align*}
Note that, after the transformation only the degree of vertices $u$ and $v$ are changed, where $d_{T^*}(u) = m+n+a+b-1$ and $d_{T^*}(v) = 3$. Thus, substituting the values of degree we have	
\begin{align*}
	SO(T^*) - SO(T) &= \sqrt{(m+n+a+b-1)^2 + d(x)^2} - \sqrt{(m+a+1)^2 + d(x)^2} \\
	&\ \ +\sqrt{3^2 + d(y)^2} - \sqrt{(n+b+1)^2 + d(y)^2}\\
	&\ \ +\sum_{i=1}^{m+a} \sqrt{(m+n+a+b-1)^2 + d(u_i)^2} - \sqrt{(m+a+1)^2 + d(u_i)^2} \\
	&\ \ +\sum_{j=1}^{n+b-2} \sqrt{(m+n+a+b-1)^2 + d(v_j)^2} - \sqrt{(n+b+1)^2 + d(v_j)^2}\\
	&\ \ + 2(\sqrt{3^2 + 1^2} - \sqrt{(n+b+1)^2 + 1^2}).
\end{align*}
Since $a > 1$, we have at least two pendant vertex attached to $u$. Also, since the function $h(x) = \sqrt{x^2+a^2}$ is monotonically increasing for $x>0$ we have $$\sqrt{(m+n+a+b-1)^2 + d(u_i)^2} - \sqrt{(m+a+1)^2 + d(u_i)^2} > 0,$$ for $1 \le i \le m+a$ and $$\sqrt{(m+n+a+b-1)^2 + d(v_j)^2} - \sqrt{(n+b+1)^2 + d(v_j)^2} > 0,$$ for $1 \le j \le n+b-1$. Thus, combining we have
\begin{align*}
	SO(T^*) - SO(T) &> \sqrt{(m+n+a+b-1)^2 + d(x)^2} - \sqrt{(m+a+1)^2 + d(x)^2} \\
	&\ \ +\sqrt{3^2 + d(y)^2} - \sqrt{(n+b+1)^2 + d(y)^2}\\
	&\ \ +2(\sqrt{(m+n+a+b-1)^2 +1} - \sqrt{(m+a+1)^2 + 1})\\
	&\ \ +2(\sqrt{3^2 +1} - \sqrt{(n+b+1)^2 + 1}).
\end{align*}
Using Lemma~\ref{lem:f} and substituting $c = n+b-2$ and $d = 1$, we have $$\sqrt{(m+n+a+b-1)^2 +1} - \sqrt{(m+a+1)^2 + 1} > \sqrt{(n+b+1)^2 + 1} - \sqrt{3^2 +1},$$ since $f(m+a+1) > f(3)$.  Next, again using Lemma~\ref{lem:f} and substituting $c = n+b-2$ and $d = d(x)$, we have $$\sqrt{(m+n+a+b-1)^2 + d(x)^2} - \sqrt{(m+a+1)^2+d(x)^2} > \sqrt{(n+b+1)^2 + d(x)^2} - \sqrt{3^2+d(x)^2},$$ since $f(m+a+1) > f(3)$. Finally, using Lemma~\ref{lem:g} and $c = n+b+1$ and $d = 3$, we have $$ \sqrt{(n+b+1)^2 + d(x)^2} - \sqrt{2^2+d(x)^2} \ge \sqrt{(n+b+1)^2 + d(y)^2} - \sqrt{2^2 + d(y)^2},$$ since $d(y) > d(x)$ and hence $g(d(x)) > g(d(y))$. Thus, combining the above inequalities we have $SO(T^*) - SO(T) > 0$, which is a contradiction.

\underline{\textbf{Subcase 1.2:}} $d(x) \ge d(y)$. Let $T^*$ be the graph obtained from $T$ by applying the following transformations:
\begin{itemize}
	\item Delete the edges $u \sim x$ and $v \sim y$.
	\item Add the edges $u \sim y$ and $v \sim x$.
	\item Delete the edges $v \sim v_i$ for all $1 \le i \le n+(b-2)$.
	\item Add the edges $u \sim v_i$ for all $1 \le i \le n+(b-2)$.
\end{itemize}
Note that, after the transformation we have $\varphi(T) = \varphi(T^*)$ and
\begin{align*}
	SO(T^*) - SO(T) &= \sqrt{d_{T^*}(u)^2 + d_{T^*}(y)^2} - \sqrt{d_{T}(v)^2 + d_{T}(x)^2} \\
	&\ \ +\sqrt{d_{T^*}(v)^2 + d_{T^*}(x)^2} - \sqrt{d_{T}(u)^2 + d_{T}(x)^2}\\
	&\ \ +\sum_{i=1}^{m+a} \sqrt{d_{T^*}(u)^2 + d_{T^*}(u_i)^2} - \sqrt{d_{T}(u)^2 + d_{T}(u_i)^2} \\
	&\ \ +\sum_{j=1}^{n+b-2} \sqrt{d_{T^*}(u)^2 + d_{T^*}(v_j)^2} - \sqrt{d_{T}(v)^2 + d_{T}(v_j)^2}\\
	&\ \ +\sqrt{d_{T^*}(v)^2 + d_{T^*}(v_{n+b-1})^2} - \sqrt{d_{T}(v)^2 + d_{T}(v_{n+b-1})^2}\\
	&\ \ +\sqrt{d_{T^*}(v)^2 + d_{T^*}(v_{n+b})^2} - \sqrt{d_{T}(v)^2 + d_{T}(v_{n+b})^2}.
\end{align*}
Note that, after the transformation only the degree of vertices $u$ and $v$ are changed, where $d_{T^*}(u) = m+n+a+b-1$ and $d_{T^*}(v) = 3$. Thus, substituting the values of degree in the above expression we have
\begin{align*}
	SO(T^*) - SO(T) &= \sqrt{(m+n+a+b-1)^2 + d(y)^2} - \sqrt{(n+b+1)^2 + d(y)^2} \\
	&\ \ +\sqrt{3^2 + d(x)^2} - \sqrt{(m+a+1)^2 + d(x)^2}\\
	&\ \ +\sum_{i=1}^{m+a} \sqrt{(m+n+a+b-1)^2 + d(u_i)^2} - \sqrt{(m+a+1)^2 + d(u_i)^2} \\
	&\ \ +\sum_{j=1}^{n+b-2} \sqrt{(m+n+a+b-1)^2 + d(v_j)^2} - \sqrt{(n+b+1)^2 + d(v_j)^2}\\
	&\ \ +2(\sqrt{3^2 + 1^2} - \sqrt{(n+b+1)^2 + 1^2}).
\end{align*}
Since $a > 0$, we have at least two pendant vertex attached to $u$. Also, the function $h(x) = \sqrt{x^2+a^2}$ is monotonically increasing for $x>0$. Thus, we have
\begin{align*}
	SO(T^*) - SO(T) &>\sqrt{(m+n+a+b-1)^2 + d(y)^2} - \sqrt{(n+b+1)^2 + d(y)^2}\\
	&\ \ +\sqrt{3^2 + d(x)^2} - \sqrt{(m+a+1)^2 + d(x)^2}\\
	&\ \ +2(\sqrt{(m+n+a+b-1)^2 +1} - \sqrt{(m+a+1)^2 + 1})\\
	&\ \ +2(\sqrt{3^2 +1} - \sqrt{(n+b+1)^2 + 1}).
\end{align*}
From the previous subcase, we have $$\sqrt{(m+n+a+b-1)^2 +1} - \sqrt{(m+a+1)^2 + 1} > \sqrt{(n+b+1)^2 + 1} - \sqrt{3^2 +1}.$$ Now, using Lemma~\ref{lem:f} and substituting $c = m+a-1$ and $d = d(y)$, we have $$\sqrt{(m+n+a+b-1)^2 + d(y)^2} - \sqrt{(n+b+1)^2 + d(y)^2} > \sqrt{(m+a+1)^2 + d(y)^2} - \sqrt{3^2 + d(y)^2},$$ since $f(n+b+1) > f(3)$. Finally, using Lemma~\ref{lem:g} and $c = m+a+1$ and $d = d(y)$, we have $$\sqrt{(m+a+1)^2 + d(y)^2} - \sqrt{3^2 + d(y)^2} \ge \sqrt{(m+a+1)^2+d(x)^2} - \sqrt{3^2 + d(x)^2},$$ since $d(x) \ge d(y)$ and hence $g(d(y)) \ge g(d(x))$. Thus, combining the above inequalities we have $SO(T^*) - SO(T) > 0$, which is a contradiction.
\\

\underline{\textbf{Case 2:}} $a=1$ but $b>1$ or $a >1$ but $b=1$.

Without loss of generality, we can assume that $a = 1$ but $b>1$. Using Lemma~\ref{lem:dis} we can choose a maximal dissociation set $D(T)$ such that it contains all the pendant and the degree $2$ quasi-pendant vertices. Since $a= 1$ but $b > 1$, $\{v\} \cup \{u_1,\cdots,u_m\} \cup \{v_1,\cdots,v_n\}  \notin D(T)$. Thus, we have $$\varphi(T) = \sum_{i=1}^{m}(d(u_i)-1) + \sum_{i=1}^{n}(d(v_i)-1)+b+\varphi(H\cup \{u\}).$$ Let $T^*$ be the graph obtained from $T$ by applying the following transformations:
\begin{itemize}
	\item Delete the edges $u \sim u_i$ for all $1 \le i \le m$.
	\item Add the edges $v \sim u_i$ for all $1 \le i \le m$.
\end{itemize}
Note that, after the transformation we have $\varphi(T) = \varphi(T^*)$ and we have
\begin{align*}
	SO(T^*) - SO(T) &= \sqrt{d_{T^*}(u)^2 + d_{T^*}(x)^2} - \sqrt{d_{T}(u)^2 + d_{T}(x)^2} \\
	&\ \ +\sqrt{d_{T^*}(v)^2 + d_{T^*}(y)^2} - \sqrt{d_{T}(v)^2 + d_{T}(y)^2}\\
	&\ \ +\sum_{i=1}^{m} \sqrt{d_{T^*}(v)^2 + d_{T^*}(u_i)^2} - \sqrt{d_{T}(u)^2 + d_{T}(u_i)^2} \\
	&\ \ +\sqrt{d_{T^*}(u)^2 + d_{T^*}(u_{m+1})^2} - \sqrt{d_{T}(u)^2 + d_{T}(u_{m+1})^2} \\
	&\ \ +\sum_{j=1}^{n+b} \sqrt{d_{T^*}(u)^2 + d_{T^*}(v_j)^2} - \sqrt{d_{T}(v)^2 + d_{T}(v_j)^2}.
\end{align*}
Note that, after the transformation only the degree of vertices $u$ and $v$ are changed, where $d_{T^*}(u) = 2$ and $d_{T^*}(v) = m+n+b+1$. Thus, substituting the values of degree in the above expression we have
\begin{align*}
	SO(T^*) - SO(T) &= \sqrt{2^2 + d(x)^2} - \sqrt{(m+2)^2 + d(x)^2} \\
	&\ \ +\sqrt{(m+n+b+1)^2 + d(y)^2} - \sqrt{(n+b+1)^2 + d(y)^2}\\
	&\ \ +\sum_{i=1}^{m} \sqrt{(m+n+b+1)^2 + d(u_i)^2} - \sqrt{(m+2)^2 + d(u_i)^2} \\
	&\ \ +\sqrt{2^2 + 1^2} - \sqrt{(m+2)^2 + 1^2} \\
	&\ \ +\sum_{j=1}^{n+b} \sqrt{(m+n+b+1)^2 + d(v_j)^2} - \sqrt{(n+b+1)^2 + d(v_j)^2}.
\end{align*}
Since $b > 1$, we have at least two pendant vertices attached to $v$. Also, the function $h(x) = \sqrt{x^2+a^2}$ is monotonically increasing for $x>0$. Thus, we have	
\begin{align*}
	SO(T^*) - SO(T) &> \sqrt{2^2 + d(x)^2} - \sqrt{(m+2)^2 + d(x)^2}\\
	&\ \ +\sqrt{2^2 + 1^2} - \sqrt{(m+2)^2 + 1^2} \\
	&\ \ +2(\sqrt{(m+n+b+1)^2 + 1} - \sqrt{(n+b+1)^2+1}).
\end{align*}
Using Lemma~\ref{lem:f} and substituting $c = m$ and $d = 1$, we have $$\sqrt{(m+n+b+1)^2 + 1} - \sqrt{(n+b+1)^2+1} > \sqrt{(m+2)^2 + 1} - \sqrt{2^2 + 1},$$ since $f(n+b+1) > f(2)$. Now using Lemma~\ref{lem:g} and substituting $c = m+2$ and $d = 2$, we have $$\sqrt{(m+2)^2 + 1} - \sqrt{2^2 + 1} > \sqrt{(m+2)^2 + d(x)^2} - \sqrt{2^2 + d(x)^2},$$ since $d(x) > 1$ and hence $g(1) > g(d(x))$. Thus, combining the above inequalities we have $SO(T^*) - SO(T) > 0$, which is a contradiction.
\\

\underline{\textbf{Case 3:}} $a=1$ and $b=1$.

Without loss of generality we assume that $d(x) \ge d(y)$. Using Lemma~\ref{lem:dis} we can choose a maximal dissociation set $D(T)$ such that it contains all the pendant and the degree $2$ quasi-pendant vertices. Since $a= 1$ and $b = 1$, $\{u_1,\cdots,u_m\} \cup \{v_1,\cdots,v_n\}  \notin D(T)$. Thus we have $$\varphi(T) = \sum_{i=1}^{m}(d(u_i)-1) + \sum_{i=1}^{n}(d(v_i)-1)+\varphi(H\cup \{u,v\}).$$ Let $T^*$ be the graph obtained from $T$ by applying the following transformations:
\begin{itemize}
	\item Delete the edges $u \sim u_i$ for all $1 \le i \le m$.
	\item Add the edges $v \sim u_i$ for all $1 \le i \le m$.
\end{itemize}
Note that after the transformation we have $\varphi(T) = \varphi(T^*)$ and we have
\begin{align*}
	SO(T^*) - SO(T) &= \sqrt{d_{T^*}(u)^2 + d_{T^*}(x)^2} - \sqrt{d_{T}(u)^2 + d_{T}(x)^2} \\
	&\ \ +\sqrt{d_{T^*}(v)^2 + d_{T^*}(y)^2} - \sqrt{d_{T}(v)^2 + d_{T}(y)^2}\\
	&\ \ +\sum_{i=1}^{m} \sqrt{d_{T^*}(v)^2 + d_{T^*}(u_i)^2} - \sqrt{d_{T}(u)^2 + d_{T}(u_i)^2} \\
	&\ \ + \sqrt{d_{T^*}(u)^2 + d_{T^*}(u_{m+1})^2} - \sqrt{d_{T}(u)^2 + d_{T}(u_{m+1})^2} \\
	&\ \ +\sum_{j=1}^{n+1} \sqrt{d_{T^*}(u)^2 + d_{T^*}(v_j)^2} - \sqrt{d_{T}(v)^2 + d_{T}(v_j)^2}.
\end{align*}
Note that, after the transformation only the degree of vertices $u$ and $v$ is changed, where $d_{T^*}(u) = 2$ and $d_{T^*}(v) = m+n+2$. Thus, substituting the values of degree in the above expression we have
\begin{align*}
	SO(T^*) - SO(T) &= \sqrt{2^2 + d(x)^2} - \sqrt{(m+2)^2 + d(x)^2} \\
	&\ \ +\sqrt{(m+n+2)^2 + d(y)^2} - \sqrt{(n+2)^2 + d(y)^2}\\
	&\ \ +\sum_{i=1}^{m} \sqrt{(m+n+2)^2 + d(u_i)^2} - \sqrt{(m+2)^2 + d(u_i)^2} \\
	&\ \ + \sqrt{2^2 + d(u_{m+1})^2} - \sqrt{(m+2)^2 + d(u_{m+1})^2} \\
	&\ \ +\sum_{j=1}^{n+1} \sqrt{(m+n+2)^2 + d(v_j)^2} - \sqrt{(n+2)^2 + d(v_j)^2}.
\end{align*}
Since the function $h(x) = \sqrt{x^2+a^2}$ is monotonically increasing for $x>0$, we have 
\begin{align*}
	SO(T^*) - SO(T) &> \sqrt{2^2 + d(x)^2} - \sqrt{(m+2)^2 + d(x)^2}\\
	&\ \ +\sqrt{(m+n+2)^2 + d(y)^2} - \sqrt{(n+2)^2 + d(y)^2}\\
	&\ \ + \sqrt{2^2 + 1^2} - \sqrt{(m+2)^2 + 1^2} \\
	&\ \ + \sqrt{(m+n+2)^2 + 1^2} - \sqrt{(n+2)^2 + 1^2}.
\end{align*}
Using Lemma~\ref{lem:f} and substituting $c = m$ and $d = d(y)$, we have $$\sqrt{(m+n+2)^2 + d(y)^2} - \sqrt{(n+2)^2+d(y)^2} > \sqrt{(m+2)^2 + d(y)^2} - \sqrt{2^2 + d(y)^2},$$ since $f(n+2) > f(2)$. Now using Lemma~\ref{lem:g} and substituting $c = m+2$ and $d = 2$, we have $$\sqrt{(m+2)^2 + d(y)^2} - \sqrt{2^2 + d(y)^2} \ge \sqrt{(m+2)^2 + d(x)^2} - \sqrt{2^2 + d(x)^2},$$ since $d(x) \ge d(y)$ and hence $g(d(y)) \ge g(d(x))$. Again, using Lemma~\ref{lem:f} and substituting $c = m$ and $d = 1$, we have $$\sqrt{(m+n+2)^2 + 1^2} - \sqrt{(n+2)^2+1^2} > \sqrt{(m+2)^2 + 1^2} - \sqrt{2^2 + 1^2},$$ since $f(n+2) > f(2)$. Thus, combining the above inequalities we have $SO(T^*) - SO(T) > 0$, which is a contradiction.

\underline{\textbf{Case 4:}} $a=0$ but $b\ge1$ or $a\ge1$ but $b=0$.

Without loss of generality, we can assume that $a = 0$ but $b\ge1$. Using Lemma~\ref{lem:dis} we can choose a maximal dissociation set $D(T)$ such that it contains all the pendant and the degree $2$ quasi-pendant vertices. On one hand, if $a= 0$ and $b > 1$, $\{v\} \cup \{u_1,\cdots,u_m\} \cup \{v_1,\cdots,v_n\}  \notin D(T)$ and on the other hand if $a= 0$ and $b = 1$, $\{u_1,\cdots,u_m\} \cup \{v_1,\cdots,v_n\}  \notin D(T)$. Thus, if $a= 0$ and $b > 1$, we have $$\varphi(T) = \sum_{i=1}^{m}(d(u_i)-1) + \sum_{i=1}^{n}(d(v_i)-1)+b+\varphi(H\cup \{u\})$$ and if $a= 0$ and $b = 1$, we have $$\varphi(T) = \sum_{i=1}^{m}(d(u_i)-1) + \sum_{i=1}^{n}(d(v_i)-1)+1+\varphi(H\cup \{u,v\}).$$  Let $T^*$ be the graph obtained from $T$ by applying the following transformations:
\begin{itemize}
	\item Delete the edges $u \sim u_i$ for all $1 \le i \le m$.
	\item Add the edges $v \sim u_i$ for all $1 \le i \le m$.
\end{itemize}
Note that, after the transformation for the both cases, where $b>1$ or $b=1$, we have $\varphi(T) = \varphi(T^*)$ and we have
\begin{align*}
	SO(T^*) - SO(T) &= \sqrt{d_{T^*}(u)^2 + d_{T^*}(x)^2} - \sqrt{d_{T}(u)^2 + d_{T}(x)^2} \\
	&\ \ +\sqrt{d_{T^*}(v)^2 + d_{T^*}(y)^2} - \sqrt{d_{T}(v)^2 + d_{T}(y)^2}\\
	&\ \ +\sum_{i=1}^{m} \sqrt{d_{T^*}(v)^2 + d_{T^*}(u_i)^2} - \sqrt{d_{T}(u)^2 + d_{T}(u_i)^2} \\
	&\ \ +\sum_{j=1}^{n+b} \sqrt{d_{T^*}(u)^2 + d_{T^*}(v_j)^2} - \sqrt{d_{T}(v)^2 + d_{T}(v_j)^2}.
\end{align*}
Note that, after the transformation only the degree of vertices $u$ and $v$ are changed, where $d_{T^*}(u) = 1$ and $d_{T^*}(v) = m+n+b+1$. Thus, substituting the values of degree in the above expression we have
\begin{align*}
	SO(T^*) - SO(T) &= \sqrt{1^2 + d(x)^2} - \sqrt{(m+1)^2 + d(x)^2} \\
	&\ \ +\sqrt{(m+n+b+1)^2 + d(y)^2} - \sqrt{(n+b+1)^2 + d(y)^2}\\
	&\ \ +\sum_{i=1}^{m} \sqrt{(m+n+b+1)^2 + d(u_i)^2} - \sqrt{(m+1)^2 + d(u_i)^2} \\
	&\ \ +\sum_{j=1}^{n+b} \sqrt{(m+n+b+1)^2 + d(v_j)^2} - \sqrt{(n+b+1)^2 + d(v_j)^2}.
\end{align*}
Since $b \ge 1$, we have at least one pendant vertex attached to $v$. Also, the function $h(x) = \sqrt{x^2+a^2}$ is monotonically increasing for $x>0$. Thus, we have	
\begin{align*}
	SO(T^*) - SO(T) &> \sqrt{1^2 + d(x)^2} - \sqrt{(m+1)^2 + d(x)^2}\\
	&\ \ +\sqrt{(m+n+b+1)^2 + 1} - \sqrt{(n+b+1)^2+1}.
\end{align*}
Using Lemma~\ref{lem:f} and substituting $c = m$ and $d = 1$, we have $$\sqrt{(m+n+b+1)^2 + 1} - \sqrt{(n+b+1)^2+1} > \sqrt{(m+1)^2 + 1} - \sqrt{1^2 + 1},$$ since $f(n+b+1) > f(1)$. Now using Lemma~\ref{lem:g} and substituting $c = m+1$ and $d = 1$, we have $$\sqrt{(m+1)^2 + 1} - \sqrt{1^2 + 1} > \sqrt{(m+1)^2 + d(x)^2} - \sqrt{1^2 + d(x)^2},$$ since $d(x) > 1$ and hence $g(1) > g(d(x))$. Thus, combining the above inequalities we have $SO(T^*) - SO(T) > 0$, which is a contradiction.
\\

\underline{\textbf{Case 5:}} $a=0$ and $b = 0$.

Without loss of generality we assume that $d(x) \ge d(y)$. Using Lemma~\ref{lem:dis} we can choose a maximal dissociation set $D(T)$ such that it contains all the pendant and degree $2$ quasi-pendant vertices. Since $a= 0$ and $b = 0$, $\{u_1,\cdots,u_m\} \cup \{v_1,\cdots,v_n\}  \notin D(T)$. Thus we have $$\varphi(T) = \sum_{i=1}^{m}(d(u_i)-1) + \sum_{i=1}^{n}(d(v_i)-1)+\varphi(H\cup \{u,v\}).$$ Let $T^*$ be the graph obtained from $T$ by applying the following transformations:
\begin{itemize}
	\item Delete the edges $u \sim u_i$ for all $1 \le i \le m$.
	\item Add the edges $v \sim u_i$ for all $1 \le i \le m$.
\end{itemize}
Note that after the transformation we have $\varphi(T) = \varphi(T^*)$ and we have
\begin{align*}
	SO(T^*) - SO(T) &= \sqrt{d_{T^*}(u)^2 + d_{T^*}(x)^2} - \sqrt{d_{T}(u)^2 + d_{T}(x)^2} \\
	&\ \ +\sqrt{d_{T^*}(v)^2 + d_{T^*}(y)^2} - \sqrt{d_{T}(v)^2 + d_{T}(y)^2}\\
	&\ \ +\sum_{i=1}^{m} \sqrt{d_{T^*}(v)^2 + d_{T^*}(u_i)^2} - \sqrt{d_{T}(u)^2 + d_{T}(u_i)^2} \\
	&\ \ +\sum_{j=1}^{n+b} \sqrt{d_{T^*}(u)^2 + d_{T^*}(v_j)^2} - \sqrt{d_{T}(v)^2 + d_{T}(v_j)^2}.
\end{align*}
Note that, after the transformation only the degree of vertices $u$ and $v$ is changed, where $d_{T^*}(u) = 1$ and $d_{T^*}(v) = m+n+1$. Thus, substituting the values of degree in the above expression we have
\begin{align*}
	SO(T^*) - SO(T) &= \sqrt{1^2 + d(x)^2} - \sqrt{(m+1)^2 + d(x)^2} \\
	&\ \ +\sqrt{(m+n+1)^2 + d(y)^2} - \sqrt{(n+1)^2 + d(y)^2}\\
	&\ \ +\sum_{i=1}^{m} \sqrt{(m+n+1)^2 + d(u_i)^2} - \sqrt{(m+1)^2 + d(u_i)^2} \\
	&\ \ +\sum_{j=1}^{n+b} \sqrt{(m+n+1)^2 + d(v_j)^2} - \sqrt{(n+1)^2 + d(v_j)^2}.
\end{align*}
Since the function $h(x) = \sqrt{x^2+a^2}$ is monotonically increasing for $x>0$, we have 
\begin{align*}
	SO(T^*) - SO(T) &> \sqrt{1^2 + d(x)^2} - \sqrt{(m+1)^2 + d(x)^2}\\
	&\ \ +\sqrt{(m+n+1)^2 + d(y)^2} - \sqrt{(n+1)^2 + d(y)^2}.
\end{align*}
Using Lemma~\ref{lem:f} and substituting $c = m$ and $d = d(y)$, we have $$\sqrt{(m+n+1)^2 + d(y)^2} - \sqrt{(n+1)^2+d(y)^2} > \sqrt{(m+1)^2 + d(y)^2} - \sqrt{1^2 + d(y)^2},$$ since $f(n+1) > f(1)$. Now using Lemma~\ref{lem:g} and substituting $c = m+1$ and $d = 1$, we have $$\sqrt{(m+1)^2 + d(y)^2} - \sqrt{1 + d(y)^2} \ge \sqrt{(m+1)^2 + d(x)^2} - \sqrt{1 + d(x)^2},$$ since $d(x) \ge d(y)$ and hence $g(d(y)) \ge g(d(x))$. Thus, combining the above inequalities we have $SO(T^*) - SO(T) > 0$, which is a contradiction.

Thus, combining all the above cases, we have that if $T \in T(\mathbf{n},\varphi)$ is a tree with maximal Sombor index, then $T \in T_1(\mathbf{n},\varphi) \cup T_2(\mathbf{n},\varphi) \cup T_3(\mathbf{n},\varphi)$.
\end{proof}

\begin{rem}
	Observe that, in the proof of Lemma~\ref{lem:1} it is possible that the vertices $x$ and $y$ are the same vertex or the vertices $u$ and $v$ are adjacent. But, in both cases, a similar argument gives the required result.
\end{rem}

\begin{lem}\label{lem:2}
	Suppose $T \in T(\mathbf{n},\varphi)$ is a tree with maximal Sombor index, then $T \in T_1(\mathbf{n},\varphi)$.
\end{lem}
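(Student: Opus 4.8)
The plan is to eliminate the class $T_2(\mathbf{n},\alpha)$ by showing that every tree in it can be strictly improved by a tree in $T_1(\mathbf{n},\alpha)$; together with Lemma~\ref{lem:1} this forces any maximizer into $T_1(\mathbf{n},\alpha)$. I would first dispose of the degenerate case: when $\alpha=\mathbf{n}/2$ the class $T_2(\mathbf{n},\alpha)$ is not defined, so Lemma~\ref{lem:1} already gives $T\in T_1(\mathbf{n},\alpha)$. Thus I may assume $\alpha\ne\mathbf{n}/2$ and, arguing by contradiction, suppose a tree $T$ of maximal Sombor index lies in $T_2(\mathbf{n},\alpha)$.

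Recall the structure of such a $T$: it arises from a star $S_{\mathbf{n}-\alpha+1}$ with center $v_0$ and leaves $v_1,\dots,v_{\mathbf{n}-\alpha}$, where $v_0$ carries no pendant while each $v_i$ carries $k_i\ge 1$ pendants with $\sum_{i=1}^{\mathbf{n}-\alpha}k_i=\alpha-1$. Writing $D:=\mathbf{n}-\alpha$, we have $d(v_0)=D\ge 2$ (using $\alpha\le\mathbf{n}-2$) and $d(v_i)=k_i+1$. The transformation I would use is to select $v_1$, set $k:=k_1$, delete the $k$ pendant edges at $v_1$, and re-attach those $k$ pendants to $v_0$. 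This makes $v_1$ a pendant of $v_0$ and turns $v_0$ into a support vertex, so the vertices $v_0,v_2,\dots,v_{\mathbf{n}-\alpha}$ now form a star $S_{\mathbf{n}-\alpha}$ in which every vertex carries at least one pendant and the total number of pendants is $\alpha$. Consequently the new tree $T^*$ belongs to $T_1(\mathbf{n},\alpha)\subset T(\mathbf{n},\alpha)$, and in particular $\alpha(T^*)=\alpha$, so the transformation stays inside the admissible class.

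Next I would compute $SO(T^*)-SO(T)$ exactly. The only edges whose endpoints change degree are those incident to $v_0$ and the pendant edges formerly at $v_1$ (all pendant edges at $v_2,\dots,v_{\mathbf{n}-\alpha}$ are unchanged), so the difference splits into three groups:
\begin{align*}
SO(T^*)-SO(T) &= \Big(\sqrt{(D+k)^2+1}-\sqrt{D^2+(k+1)^2}\Big) \\
&\quad + \sum_{i=2}^{D}\Big(\sqrt{(D+k)^2+(k_i+1)^2}-\sqrt{D^2+(k_i+1)^2}\Big) \\
&\quad + k\Big(\sqrt{(D+k)^2+1}-\sqrt{(k+1)^2+1}\Big).
\end{align*}
The middle sum is positive term-by-term because $D+k>D$ (this is also immediate from Lemma~\ref{lem:f}), and the last group is positive because $D+k>k+1$. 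For the first term I would use the algebraic identity $(D+k)^2+1-\big(D^2+(k+1)^2\big)=2k(D-1)$, which is strictly positive since $D\ge 2$ and $k\ge 1$. Hence all three groups are positive, giving $SO(T^*)>SO(T)$ and contradicting the maximality of $T$.

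I expect the only delicate point to be the first term, where the degree of $v_0$ increases from $D$ to $D+k$ while the degree of $v_1$ simultaneously drops from $k+1$ to $1$; since both endpoints move at once, its sign is not visible from monotonicity alone, and the explicit identity $2k(D-1)>0$ is what settles it. It is exactly here that the hypothesis $\alpha\le\mathbf{n}-2$ (equivalently $D\ge 2$) is needed. Every other changed edge only sees $v_0$'s degree grow, so its contribution is manifestly positive. Assembling these estimates completes the contradiction and shows that a maximizer must lie in $T_1(\mathbf{n},\alpha)$.
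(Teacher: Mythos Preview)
Your proof is correct and follows essentially the same route as the paper: the identical transformation (moving all pendants of $v_1$ to $v_0$), the same resulting tree $T^*\in T_1(\mathbf{n},\alpha)$, and the same key inequality $(D+k)^2+1\ge D^2+(k+1)^2$ coming from $k(D-1)\ge 0$. The only cosmetic difference is that the paper drops the obviously positive contributions (your middle sum and last group) behind a single strict ``$>$'' and then checks only the $v_0v_1$-edge term, whereas you write out all three groups explicitly.
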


\begin{proof}
	Suppose $T \in T(\mathbf{n},\varphi)$ is a tree with maximal Sombor index and $T \in T_2(\mathbf{n},\varphi)$. From the definition of the class $T_2(\mathbf{n},\varphi)$ we have $v_0$ is the central vertex of the induced star graph and $N(v_0) = \{u\} \cup  \{v_1,v_2,\cdots,v_{\mathbf{n}-\varphi}\}$. Let $N(v_1) = \{v_0\} \cup  \{w_1,w_2,\cdots,w_k\}$. Using Lemma~\ref{lem:dis}, we can choose a maximal dissociation set $D(T)$ such that it contains all the pendant and degree $2$ quasi-pendant vertices. Note that $\{v_1,v_2,\cdots,v_{\mathbf{n}-\varphi}\} \notin D(T)$. Let $T^*$ be the graph obtained from $T$ by applying the following transformations:
	\begin{itemize}
		\item Delete the edges $v_1 \sim w_i$ for all $1 \le i \le k$.
		\item Add the edges $v_0 \sim w_i$ for all $1 \le i \le k$.
	\end{itemize}
	Note that after the transformation, we have $\varphi(T) = \varphi(T^*)$ and we have 
	\begin{align*}
		SO(T^*) - SO(T) &= \sum_{i=1}^{\mathbf{n}-\alpha} \sqrt{d_{T^*}(v_0)^2 + d_{T^*}(v_i)^2} - \sqrt{d_{T}(v_0)^2 + d_{T}(v_i)^2} \\
		&\ \ +\sum_{j=1}^{k} \sqrt{d_{T^*}(v_0)^2 + d_{T^*}(w_j)^2} - \sqrt{d_{T}(v_1)^2 + d_{T}(w_j)^2}\\
		&\ \ + \sqrt{d_{T^*}(v_0)^2 + d_{T^*}(u)^2} - \sqrt{d_{T}(v_0)^2 + d_{T}(u)^2}.
	\end{align*}
	Note that, after the transformation only the degree of vertices $v_0$ and $v_1$ are changed, where $d_{T^*}(v_0) = (\mathbf{n}-\varphi+1)+k$ and $d_{T^*}(v_1) = 1$. Thus, substituting the values of degree in the above expression we have
	\begin{align*}
		SO(T^*) - SO(T) &= \sqrt{(\mathbf{n}-\varphi+k+1)^2 + 1^2} - \sqrt{(\mathbf{n}-\varphi+1)^2 + (k+1)^2} \\
		&\ \ +\sum_{i=2}^{\mathbf{n}-\varphi} \sqrt{(\mathbf{n}-\varphi+k+1)^2 + d(v_i)^2} - \sqrt{(\mathbf{n}-\varphi+1)^2 + d(v_i)^2} \\
		&\ \ +\sum_{j=1}^{k} \sqrt{(\mathbf{n}-\varphi+k+1)^2 + d(w_j)^2} - \sqrt{(k+1)^2 + d(w_j)^2}\\
		&\ \ + \sqrt{(\mathbf{n}-\varphi+k+1)^2 + 1^2} - \sqrt{(\mathbf{n}-\varphi+1)^2 + 1^2}
	\end{align*}
	Since the function $h(x) = \sqrt{x^2+a^2}$ is monotonically increasing for $x>0$, we have	
	$$SO(T^*) - SO(T) > \sqrt{(\mathbf{n}-\varphi+k+1)^2 + 1} - \sqrt{(\mathbf{n}-\varphi+1)^2 + (k+1)^2} \ge 0,$$ where the last inequality is true since $k(\mathbf{n}-\varphi+1) \ge k$. Note that, $T^* \in T_1(\mathbf{n},\alpha)$ and $SO(T^*) > SO(T)$, which is a contradiction.
	
	Next, suppose $T \in T(\mathbf{n},\varphi)$ is a tree with maximal Sombor index and $T \in T_3(\mathbf{n},\varphi)$. From the definition of the class $T_3(\mathbf{n},\varphi)$ we have $v_0$ is the central vertex of the induced star graph and $N(v_0) =  \{v_1,v_2,\cdots,v_{\mathbf{n}-\varphi}\}$. Let $N(v_1) = \{v_0\} \cup  \{w_1,w_2,\cdots,w_k\}$. Using Lemma~\ref{lem:dis}, we can choose a maximal dissociation set $D(T)$ such that it contains all the pendant and degree $2$ quasi-pendant vertices. Note that $\{v_1,v_2,\cdots,v_{\mathbf{n}-\varphi}\} \notin D(T)$. Let $T^*$ be the graph obtained from $T$ by applying the following transformations:
	\begin{itemize}
		\item Delete the edges $v_1 \sim w_i$ for all $1 \le i \le k$.
		\item Add the edges $v_0 \sim w_i$ for all $1 \le i \le k$.
	\end{itemize}
	Note that after the transformation, we have $\varphi(T) = \varphi(T^*)$ and we have 
	\begin{align*}
		SO(T^*) - SO(T) &= \sum_{i=1}^{\mathbf{n}-\alpha} \sqrt{d_{T^*}(v_0)^2 + d_{T^*}(v_i)^2} - \sqrt{d_{T}(v_0)^2 + d_{T}(v_i)^2} \\
		&\ \ +\sum_{j=1}^{k} \sqrt{d_{T^*}(v_0)^2 + d_{T^*}(w_j)^2} - \sqrt{d_{T}(v_1)^2 + d_{T}(w_j)^2}.
	\end{align*}
	Note that, after the transformation only the degree of vertices $v_0$ and $v_1$ are changed, where $d_{T^*}(v_0) = (\mathbf{n}-\varphi)+k$ and $d_{T^*}(v_1) = 1$. Thus, substituting the values of degree in the above expression we have
	\begin{align*}
		SO(T^*) - SO(T) &= \sqrt{(\mathbf{n}-\varphi+k)^2 + 1^2} - \sqrt{(\mathbf{n}-\varphi)^2 + (k+1)^2} \\
		&\ \ +\sum_{i=2}^{\mathbf{n}-\varphi} \sqrt{(\mathbf{n}-\varphi+k)^2 + d(v_i)^2} - \sqrt{(\mathbf{n}-\varphi)^2 + d(v_i)^2} \\
		&\ \ +\sum_{j=1}^{k} \sqrt{(\mathbf{n}-\varphi+k)^2 + d(w_j)^2} - \sqrt{(k+1)^2 + d(w_j)^2}.
	\end{align*}
	Since the function $h(x) = \sqrt{x^2+a^2}$ is monotonically increasing for $x>0$, we have	
	$$SO(T^*) - SO(T) > \sqrt{(\mathbf{n}-\varphi+k)^2 + 1} - \sqrt{(\mathbf{n}-\varphi)^2 + (k+1)^2} \ge 0,$$ where the last inequality is true since $k(\mathbf{n}-\varphi) \ge k$. Note that, $T^* \in T_1(\mathbf{n},\alpha)$ and $SO(T^*) > SO(T)$, which is a contradiction.
\end{proof}

Now we state and prove the main result of the article.

\begin{theorem}\label{thm:main}
	Let $\varphi \le \mathbf{n}-2$ and $T \in T(\mathbf{n},\varphi)$, then $$SO(T) \le (3\varphi-2(\mathbf{n}-1)) \sqrt{(2\varphi-\mathbf{n}+1)^2 + 1} + (\mathbf{n}-(\varphi+1)) [\sqrt{(2\varphi-\mathbf{n}+1)^2 + 2^2} + 2\sqrt{2^2 + 1}]$$ and equality is attained if and only if $T \cong T^*(\mathbf{n},\varphi)$. If $\varphi = \mathbf{n}-1$, then $T(\mathbf{n},\mathbf{n}-1) = \{S_\mathbf{n}\}$ and $SO(S_\mathbf{n}) = (\mathbf{n}-1)\sqrt{(\mathbf{n}-1)^2 + 1} = \varphi \sqrt{\varphi^2 + 1}$.
\end{theorem}

\begin{proof}
	Let $\varphi \le \mathbf{n}-2$ and $T \in T(\mathbf{n},\varphi) \ne T^*(\mathbf{n},\varphi)$ be a tree with maximal Sombor index, then using Lemmas~\ref{lem:1} and \ref{lem:2}, we have that $T \in T_1(\mathbf{n},\varphi)$. Since $T \in T(\mathbf{n},\varphi) \ne T^*(\mathbf{n},\varphi)$, there exists a vertex $v_1 \in N(v_0)$ which is not a pendant vertex and $d(v_1) \ge 4$. Let $N(v_1) = \{v_0\} \cup \{w_1,w_2,\cdots,w_k\}$, where $d(w_i) = 1$ for $1 \le i \le k$ and hence $d(v_1) = k+1$. Let $N(v_0) = \{v_1,v_2,\cdots,v_{\mathbf{n}-(\varphi+1)}\} \cup \{u_1,u_2,\cdots,u_m\}$, where $d(u_i) = 1$ for $1 \le i \le m$ and hence $d(v_0) = \mathbf{n}-(\varphi+1)+m \ge 2$. For simplicity of further calculations we assume that $d(v_0) = l+2$. Let $T^*$ be the graph obtained from $T$ by applying the following transformations:
	\begin{itemize}
		\item Delete the edges $v_1 \sim w_i$ for all $3 \le i \le k$.
		\item Add the edges $v_0 \sim w_i$ for all $3 \le i \le k$.
	\end{itemize} 
	Note that after the transformation, we have $\varphi(T) = \varphi(T^*)$ and
	\begin{align*}
		SO(T^*) - SO(T) &= \sum_{i=1}^{\mathbf{n}-(\varphi+1)} \sqrt{d_{T^*}(v_0)^2 + d_{T^*}(v_i)^2} - \sqrt{d_{T}(v_0)^2 + d_{T}(v_i)^2} \\
		&\ \ +\sum_{j=1}^{m} \sqrt{d_{T^*}(v_0)^2 + d_{T^*}(u_j)^2} - \sqrt{d_{T}(v_0)^2 + d_{T}(u_j)^2}\\
		&\ \ + \sqrt{d_{T^*}(v_1)^2 + d_{T^*}(w_1)^2} - \sqrt{d_{T}(v_1)^2 + d_{T}(w_1)^2}\\
		&\ \ + \sqrt{d_{T^*}(v_1)^2 + d_{T^*}(w_2)^2} - \sqrt{d_{T}(v_1)^2 + d_{T}(w_2)^2}\\
		&\ \ + \sum_{p=3}^{k} \sqrt{d_{T^*}(v_0)^2 + d_{T^*}(w_p)^2} - \sqrt{d_{T}(v_1)^2 + d_{T}(w_p)^2}.
	\end{align*}
	Note that, after the transformation only the degree of vertices $v_0$ and $v_1$ are changed, where $d_{T^*}(v_0) = l+k$ and $d_{T^*}(v_1) = 3$. Thus, substituting the values of degree in the above expression we have
	\begin{align*}
		SO(T^*) - SO(T) &= \sqrt{(l+k)^2 + 3^2} - \sqrt{(l+2)^2 + (k+1)^2} \\
		&\ \ +\sum_{i=2}^{\mathbf{n}-(\varphi+1)} \sqrt{(l+k)^2 + d(v_i)^2} - \sqrt{(l+2)^2 + d(v_i)^2} \\
		&\ \ +\sum_{j=1}^{m} \sqrt{(l+k)^2 + 1^2} - \sqrt{(l+2)^2 + 1^2}\\
		&\ \ + 2(\sqrt{3^2 + 1^2} - \sqrt{(k+1)^2 + 1^2})\\
		&\ \ + \sum_{p=3}^{k} \sqrt{(l+k)^2 + 1^2} - \sqrt{(k+1)^2 + 1^2}.
	\end{align*}
	Since there is at least two pendant vertex attached to $v_0$ we have $m > 1$. Also, the function $h(x) = \sqrt{x^2+a^2}$ is monotonically increasing for $x>0$. Thus, we have	
	\begin{align*}
		SO(T^*) - SO(T) &> \sqrt{(l+k)^2 + 3^2} - \sqrt{(l+1)^2 + (k+1)^2}\\
		&\ \ +2(\sqrt{(l+k)^2 + 1} - \sqrt{(l+2)^2 + 1})\\
		&\ \ +2(\sqrt{3^2 + 1} - \sqrt{(k+1)^2 + 1}).
	\end{align*}
	Using Lemma~\ref{lem:f} and substituting $c = k-2$ and $d = 1$, we have $$\sqrt{(l+k)^2 + 1} - \sqrt{(l+2)^2 + 1} \ge \sqrt{(k+1)^2 + 1} - \sqrt{3^2 + 1},$$ since $f(l+2) \ge f(3)$. Also, it follows from the inequality $lk + 1 \ge l+k$ that $$\sqrt{(l+k)^2 + 3^2} \ge \sqrt{(l+2)^2 + (k+1)^2}$$ for all $l,k \ge 1$. Hence, combining the inequalities we have $SO(T^*) > SO(T)$, which is a contradiction. Thus, $T \cong T^*(\mathbf{n},\varphi)$ and the result follows.
	
	If $\varphi = \mathbf{n}-1$, then we have already shown in the beginning of the section that $T(\mathbf{n},\mathbf{n}-1) = \{S_\mathbf{n}\}$ and $SO(S_\mathbf{n}) = (\mathbf{n}-1)\sqrt{(\mathbf{n}-1)^2 + 1} = \varphi \sqrt{\varphi^2 + 1}$.
\end{proof}

\small{

}

\end{document}